\documentclass[12pt]{article}
\usepackage{amsmath,amssymb,amsthm}

\newtheorem{definition}{Definition}
\newtheorem{theorem}[definition]{Theorem}

\newtheorem{corollary}[definition]{Corollary}

\newtheorem*{definition*}{Definition}
\newtheorem*{theorem*}{Theorem}
\newtheorem*{proposition*}{Proposition}
\newtheorem*{example*}{Example}
\newtheorem*{exercise*}{Exercise}
\newtheorem*{corollary*}{Corollary}
\newtheorem*{remark*}{Remark}

\usepackage{geometry}
\usepackage[shortlabels, inline]{enumitem}
\usepackage{bm}
\usepackage{graphicx}
\usepackage{float}

\allowdisplaybreaks[2]
\usepackage{color}
\usepackage{mathtools}
\mathtoolsset{showonlyrefs=true}
\usepackage{url}

\begin{document}


\begin{center}
  ~\vspace{20pt}
  
  \Large 
  Analysis of an Inhomogeneous Random Walk with Spatial Decay of Transition Probabilities and Parameter Renewal per Excursion

  \vspace{20pt}
  
  \large
  Naohiro Yoshida

  \normalsize
  Department of Economics,
  Keiai University

  1-5-21,
  Anagawa, Inage,
  263-8588,
  Chiba,
  Japan

  E-mail:
  \url{n-yoshida@u-keiai.ac.jp}

\end{center}

  \vspace{20pt}

\noindent
  MSC2020:
  60G50 ; 60J10 ; 60K37
  
\noindent
  Keywords:
  inhomogeneous random walk ; excursion theory ; random environment ; scale function ; hitting time ; occupation time

\begin{abstract}
  In this paper, we propose and analyze a novel one-dimensional inhomogeneous random walk model that combines spatial decay of transition probabilities with a temporal renewal structure for each excursion. In this model, the probability of moving to the right from each state creats a spatial inhomogeneity that causes a stronger pull-back toward the origin as the process moves farther away. Furthermore, it features a random environment aspect where the parameter of each transition probability is independently resampled from a uniform distribution at the beginning of each excursion. We rigorously derive the hitting probability to an upper boundary using a scale function. Furthermore, by solving linear difference equations, we provide the probability generating function of the first hitting time, the expected occupation time for each state during an excursion (discrete Green's function), and the distribution and expectation of the maximum penetration depth.
\end{abstract}

\newpage
\section{Introduction}

The analysis of "excursions" in random walks, that is, the trajectory from leaving the origin until returning to it, is a classical and central theme in probability theory. In the framework of the simple symmetric random walk (SSRW), the asymptotic behavior of the length and height distributions of excursions, or their order statistics, has been investigated in detail by, e.g., \cite{CsakiHu2003, CsakiErdosRevesz1985, CsakiMohanty1981,Kaigh1978}, including their connection to the local time of Brownian motion.

The applications of excursion theory are diverse, playing important roles in tests of randomness in \cite{BaronRukhin1998} and the pricing of financial options in \cite{FujitaKawanishiYor2014, FujitaYor2007}. In recent years, \cite{FujitaYoshida2023, FujitaYoshida2025} have reported new developments, such as proofs of the arcsine law for simple random walks using excursions associated with specific local times and the method of marked excursions. Furthermore, \cite{FujitaYoshida2023_JSIAML} proposed a framework that applies discrete-time excursion theory to evaluate ``excursion risk" in financial investments, reaffirming its importance from a practical perspective.

As a recent research trend, the analysis of models that add specific modulations or inhomogeneities to standard random walks is actively conducted. As preceding studies closely related to this research, we first mention the work by \cite{pilipenko2017limit} on a model where transition probabilities are modified every time it returns to the origin. They discuss the impact of such modifications per visit on limit theorems, supporting the mathematical validity of the renewal structure of ``parameter resampling per excursion" in our model. Additionally, Engländer and Volkov \cite{englander2019impatient} proposed the impatient random walk, where sojourn times and movement rules depend on time or space, demonstrating the rich probabilistic behavior generated by inhomogeneity.

In this paper, following the lineage of these preceding studies, we propose a novel inhomogeneous random walk model with a spatial decay parameter $r \in (0, 1]$. This model combines the following two core features:
\begin{enumerate}[i.]
 \item At the start of each excursion, the parameter $p$ is independently resampled from a uniform distribution $U(0, 1)$ (temporal renewal structure).
 \item The probability of moving to the right from state $k$ decays exponentially as $r^k p$, creating a stronger pull-back toward the origin as the process moves farther away (spatial inhomogeneity).
\end{enumerate}

The structure of this paper is as follows. First, in Section 2, we clearly state the setup of the stochastic process. Then, in Section 3, we present the main results of this paper. We begin by deriving the hitting probability to a specific boundary $N$ using a scale function. Next, we provide the probability generating function of the hitting time and the expected value of the occupation time. Finally, we analyze the distribution and expected value of the maximum penetration depth during an excursion.

\section{Model Description}

Let $(X_t)_{t \ge 0}$ be a one-dimensional discrete-time stochastic process taking values in the state space $\mathcal{S} = \{0, 1, 2, \dots\}$. This process $(X_t)_{t \ge 0}$ represents an intrinsically inhomogeneous, space-dependent random walk modulated by a random environmental variable.

The trajectory of this walk is decomposed into a sequence of independent trials called excursions. Each excursion always starts from state $0$. At the beginning of the $m$-th excursion, an environmental parameter $Z_m$ is sampled independently from a continuous uniform distribution, i.e., $Z_m \sim U(0,1)$. This parameter is kept constant throughout the duration of that specific excursion.

Conditional on the sampled parameter $Z_m = p$, at the initial state $k=0$, the process starts an excursion by moving one step to the right with probability $p$, and staying in place with probability $1-p$:
\begin{align}
    P(X_{t+1} = 1 \mid X_t = 0, Z_m = p) &= p, \\
    P(X_{t+1} = 0 \mid X_t = 0, Z_m = p) &= 1 - p.
\end{align}
The transition probabilities of $X_t$ from state $k \in \{1, 2, \dots\}$ are defined as follows:
\begin{align}
    P(X_{t+1} = k+1 \mid X_t = k, Z_m = p) &= r^k p, \\
    P(X_{t+1} = k-1 \mid X_t = k, Z_m = p) &= 1 - r^k p,
\end{align}
where $r \in (0, 1]$ is a fixed spatial decay parameter. The factor $r^k$ introduces a spatial constraint. As the particle moves to the right (in the direction of increasing $k$), the probability of continuing to move right decays exponentially, generating a strong drift that pulls it back toward the origin.

When the process returns to $0$, the current excursion ends, a new environmental parameter $Z_{m+1} \sim U(0,1)$ is independently sampled, and the next excursion starts from state $0$. This resampling procedure introduces a renewal structure into the sequence of excursions.

For much of this paper, we deal with a killed process, where the process is terminated if it reaches an upper boundary $N>0$ before returning to the origin $0$. In that case, the duration of the excursion is given by the stopping time $\min(\theta, \tau_N)$, where $\theta = \inf\{t > 0 \mid X_t = 0\}$ is the first return time to $0$, and $\tau_N = \inf\{t > 0 \mid X_t = N\}$ is the first hitting time to $N$.

\section{Results}

For $i = 0, 1, 2, \dots$, let $\theta_i$ be the $i$-th time that $X$ reaches $0$. Specifically, we set $\theta_0 = 0$ and define $\theta_i = \inf \{ u > \theta_{i-1} \mid X_u = 0 \}$ for $i \geq 1$.

Furthermore, we denote the excursions as $e_1, e_2, e_3, \dots$, that is, $e_i=(X_{\theta_{i-1}}, X_{\theta_{i-1}+1}, \dots, X_{\theta_{i}})$ for $i=1, 2, \dots$.

\subsection{First Hitting Time}

First, we investigate the distribution of the height of the excursion.

\begin{theorem}
\label{thm1}
Let the scale function defined for $k \ge 1$ be
\begin{align}
    S(k;p) = \sum_{i=0}^{k-1} \prod_{j=1}^{i} \frac{1 - r^j p}{r^j p}
\end{align}
where the empty product for $i=0$ is defined as $1$. Then, the probability that the first excursion $e_1$ does not reach $N$ is given by:
\begin{align}
    P(e_1 \text{ does not reach } N) = 1 - \int_0^1 \frac{p}{S(N;p)} dp.
\end{align}
\end{theorem}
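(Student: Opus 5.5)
We condition on the environmental parameter $Z_1=p$ and reduce the statement to a gambler's-ruin computation for a birth--death chain with state-dependent probabilities. Let $h(k)=h(k;p)$ be the probability that the walk started at $k$ hits $N$ before $0$. Since the walk is nearest-neighbour between $1$ and $N-1$, $h$ solves
\begin{align}
h(k) = r^k p\, h(k+1) + (1-r^k p)\, h(k-1), \qquad 1\le k\le N-1,
\end{align}
with $h(0)=0$ and $h(N)=1$. Rewriting this as $r^k p\,(h(k+1)-h(k)) = (1-r^kp)(h(k)-h(k-1))$ and iterating gives
\begin{align}
h(i+1)-h(i)=\Bigl(\prod_{j=1}^{i}\frac{1-r^jp}{r^jp}\Bigr)\bigl(h(1)-h(0)\bigr).
\end{align}
Summing from $i=0$ to $k-1$ yields $h(k)=h(1)\,S(k;p)$. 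The boundary condition $h(N)=1$ then forces $h(1)=1/S(N;p)$, so $h(k)=S(k;p)/S(N;p)$.

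Next we handle the first step from $0$. Starting at $0$, the walk either stays at $0$ with probability $1-p$ or jumps to $1$ with probability $p$. Here we must fix the convention for what counts as an excursion: since $\theta_1=\inf\{u>0: X_u=0\}$, a holding step $0\to 0$ terminates $e_1$ as a trivial excursion that does not reach $N$. Under this convention, reaching $N$ requires first jumping to $1$ and then hitting $N$ before $0$. By the strong Markov property at time $1$,
\begin{align}
P(e_1 \text{ reaches } N \mid Z_1=p) = p\, h(1) = \frac{p}{S(N;p)}.
\end{align}

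Finally, since $Z_1\sim U(0,1)$ is independent of the subsequent transition mechanism, integrating over $p$ gives
\begin{align}
P(e_1 \text{ reaches } N) = \int_0^1 \frac{p}{S(N;p)}\,dp,
\end{align}
and taking complements yields the stated formula. For measurability and the applicability of the law of total probability, note that $p\mapsto p/S(N;p)$ is continuous on $(0,1]$ with values in $[0,1]$, because $S(N;p)\ge 1$. As $p\to 0$ the products blow up, so the integrand tends to $0$ and no integrability issue arises. The case $r=1$ is covered by the same formula without change.

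The main obstacle is justifying that the difference equation determines $h$ uniquely, and interpreting the event correctly. Uniqueness is immediate here because the state space $\{0,\dots,N\}$ is finite and the chain exits it almost surely: from any state there is positive probability of moving down to $0$, since $1-r^kp>0$ whenever $p<1$ or $r<1$. In the boundary case $r=p=1$ the walk is deterministic to the right, so $h=1$ and $S=N$; this case has measure zero in $p$ and does not affect the integral. The only other subtlety is the holding-at-$0$ convention, which we make explicit as above.
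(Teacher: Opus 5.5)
Your proof is correct and follows essentially the same route as the paper: solve the gambler's-ruin recursion via the scale function, do first-step analysis at $0$, then integrate over $Z_1\sim U(0,1)$; the only cosmetic difference is that you solve for $h=1-a$ rather than the paper's return probability $a(k;p)$. One harmless slip in your side remark: when $r=p=1$ every ratio $(1-r^jp)/(r^jp)$ vanishes, so $S(N;1)=1$ rather than $N$, and this is exactly what makes $h(k)=S(k;p)/S(N;p)=1$ consistent with the deterministic walk.
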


\begin{proof}
Suppose the condition $Z_1=p$ is given. Let $a(k;p)$ be the probability that the random walk $X$ starting from state $k \in \{1, \dots, N-1\}$ reaches $0$ before reaching $N$. By the Markov property, $a(k;p)$ satisfies the following linear difference equation:
\begin{align}
    a(k;p) = r^k p a(k+1;p) + (1 - r^k p) a(k-1;p).
\end{align}
The boundary conditions are $a(0;p) = 1$ and $a(N;p) = 0$. Rewriting this equation, we obtain:
\begin{align}
    r^k p (a(k+1;p) - a(k;p)) = (1 - r^k p) (a(k;p) - a(k-1;p)).
\end{align}
From the boundary condition $u(0;p)=1$, this difference equation yields
\begin{align}
      \frac{a(k+1;p) - a(k;p)}{a(k;p) - a(k-1;p)} &= \frac{1 - r^k p}{r^k p}
      \\
     \frac{a(i;p) - a(i-1;p)}{a(1;p) - a(0;p)}&= \prod_{j=0}^{i-1} \frac{(1 - r^j p)}{r^j p}
     \\
     a(i;p) - a(i-1;p)&=(a(1;p) - 1)\prod_{j=0}^{i-1} \frac{(1 - r^j p)}{r^j p}
\end{align}
Therefore, we have
\begin{align}
     a(k;p)-1=(a(1;p) - 1)\sum_{i=1}^{k} \prod_{j=0}^{i-1} \frac{(1 - r^j p)}{r^j p}
     =(a(1;p) - 1)S(k;p).
\end{align}
When $k=N$, $a(N;p)-1=(a(1;p) - 1)S(N;p)$ implies that, from $a(N;p)=0$, 
\begin{align}
   a(1;p) - 1=-\frac{1}{S(N;p)}
\end{align}
Therefore, we obtain
\begin{align}
  a(k;p)-1&=-\frac{S(k;p)}{S(N;p)}
  \\
  a(k;p)&=\frac{S(N;p)-S(k;p)}{S(N;p)}.
\end{align}
In particular, the probability of reaching $0$ before $N$ starting from $k=1$ is
\begin{align}
    a(1;p) = 1 - \frac{S(1;p)}{S(N;p)} = 1 - \frac{1}{S(N;p)}.
\end{align}
The event that $e_1$ does not reach $N$ occurs if the first step does not move with probability $1-Z_1$ (in which case it terminates immediately), or if the first step is to the right with probability $Z_1$ and subsequently reaches $0$ before hitting $N$. Taking the expectation with respect to the uniform random variable $Z_1 \in (0,1)$,
\begin{align}
    P(e_1 \text{ does not reach } N) &= E[ 1 - Z_1 + Z_1 a(1;Z_1) ] \nonumber \\
    &= \int_0^1 \left( 1 - p + p \left( 1 - \frac{1}{S(N;p)} \right) \right) dp \nonumber \\
    &= 1 - \int_0^1 \frac{p}{S(N;p)} dp
\end{align}
which completes the proof.
\end{proof}

We define the local time of $X$ as $L_t = \max\{i \mid \theta_i \leq t\}$ ($t = 0, 1, 2, \dots$). Then, $L_{\tau_N}$ represents the number of excursions before $X$ reaches $N$ for the first time. The probability distribution of $L_{\tau_N}$ is as follows.

\begin{corollary}
For $n=0, 1, 2, \dots$,
\begin{align}
    P(L_{\tau_N}=n) = \left( 1 - \int_0^1 \frac{p}{S(N;p)} dp \right)^n \int_0^1 \frac{p}{S(N;p)} dp ,
\end{align}
where $S(N;p)$ is the scale function defined in Theorem \ref{thm1}.
\end{corollary}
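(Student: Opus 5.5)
The plan is to read $L_{\tau_N}$ as the number of failures before the first success in a sequence of i.i.d.\ Bernoulli trials, one trial per excursion, and then apply Theorem~\ref{thm1}. Set
\begin{align}
  q := \int_0^1 \frac{p}{S(N;p)}\,dp .
\end{align}
By Theorem~\ref{thm1}, $q$ is the probability that the first excursion $e_1$ reaches $N$ before returning to $0$.

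\textbf{Independence of excursions.} Each excursion starts at state $0$ with a freshly sampled parameter $Z_m \sim U(0,1)$, independent of everything before it. Conditional on $Z_m = p$, the moves during the excursion follow the fixed transition kernel given in Section~2. Hence, by the strong Markov property at the return times $\theta_{m-1}$ together with the independence of the $Z_m$, the excursions $e_1, e_2, \dots$ are i.i.d. Here we treat an excursion that hits $N$ as a terminal trial; in the killed framework the process stops there anyway.

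\textbf{Success indicators.} Let $A_m$ be the event that excursion $e_m$ reaches $N$ before returning to $0$. By the previous step the events $A_m$ are independent, and $P(A_m) = q$ for every $m$, because each excursion has the same law as $e_1$.

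\textbf{Identifying the event.} The process hits $N$ for the first time during the $(n+1)$-th excursion exactly on the event $A_1^c \cap \dots \cap A_n^c \cap A_{n+1}$. On this event $\theta_n \le \tau_N < \theta_{n+1}$, so by definition of the local time $L_{\tau_N} = n$. Independence then gives
\begin{align}
  P(L_{\tau_N} = n) = (1-q)^n q ,
\end{align}
which is the claimed formula.

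\textbf{Main obstacle.} The only delicate point is justifying the i.i.d.\ structure. This needs the strong Markov property of the joint process (position together with the current environment) at the stopping times $\theta_m$, and the independence of the fresh $Z_m$ from the past. I would state this briefly, noting that the uniform draw $Z_m$ is made exactly at $\theta_{m-1}$. I would also check the degenerate excursion in which the walk stays at $0$ for one step. This occurs with probability $1-p$ and counts as a complete excursion that does not reach $N$. That convention matches the proof of Theorem~\ref{thm1}, so the indexing $L_{\tau_N} = n$ is consistent.
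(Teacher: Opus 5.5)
Your proposal is correct and follows the paper's own argument. Resampling $Z$ at the start of each excursion makes the excursions independent trials, so $L_{\tau_N}$ is geometric with success probability $\int_0^1 p/S(N;p)\,dp$ from Theorem~\ref{thm1}. Your extra care fills in details the paper leaves implicit: the strong Markov property of the joint position--environment process at the $\theta_m$, the identification $\theta_n \le \tau_N < \theta_{n+1}$, and treating the excursion that hits $N$ as terminal.
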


\begin{proof}
The event $\{L_{\tau_N} = n\}$ means that the first $n$ excursions return to $0$ without hitting $N$, and the $(n+1)$-th excursion successfully reaches $N$. Since the random parameter $Z$ is independently resampled at the start of each excursion, the sequence of excursions forms a sequence of independent trials. Therefore,
\begin{align}
    P(L_{\tau_N}=n) &= P(e_1 \text{ does not reach } N)^n \cdot P(e_1 \text{ reaches } N) \nonumber \\
    &= P(e_1 \text{ does not reach } N)^n \cdot \big( 1 - P(e_1 \text{ does not reach } N) \big).
\end{align}
By substituting the explicit probabilities derived in Theorem \ref{thm1}, the geometric distribution as claimed is directly obtained.
\end{proof}

We seek the length of the excursion when it is not killed. It can be characterized by the solution to a difference equation.
Hereafter, let $I(A)$ denote the indicator function of an event $A$, that is, $I(A)=1$ if $A$ occurs, and $I(A)=0$ if $A$ does not occur.

\begin{theorem}
\label{thm2}
For $0 < q < 1$,
\begin{align}
    E[q^{\theta_1} I(e_1 \text{ does not reach } N)] = \frac{q}{2} + \int_0^1 p q b(1;p) dp,
\end{align}
where $b(k;p)$ is the unique solution to the following linear boundary value problem:
\begin{align}
    b(k;p) = q r^k p b(k+1;p) + q (1 - r^k p) b(k-1;p), \quad k = 1, 2, \dots, N-1,
\end{align}
with boundary conditions $b(0;p) = 1$ and $b(N;p) = 0$.
\end{theorem}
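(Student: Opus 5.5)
Conditional on $Z_1=p$, I would split on the first step of $e_1$ from $0$ and then use the Markov property of $X$ inside the excursion, where the environment stays fixed at $p$. Since $\theta_1=\inf\{t>0\mid X_t=0\}$, if the first step is a stay at $0$ (probability $1-p$), then $\theta_1=1$ and $e_1$ trivially does not reach $N$. This branch contributes $q(1-p)$. If the first step goes to $1$ (probability $p$), then $\theta_1=1+\theta'$, where $\theta'$ is the hitting time of $0$ for the walk started at $1$ with parameter $p$. This branch contributes $pq\,E_1^p[q^{\theta'}I(\theta'<\tau_N)]$. Integrating $q(1-p)$ over $p\in(0,1)$ gives $q/2$. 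It therefore remains to identify
\begin{align}
  b(k;p):=E_k^p\bigl[q^{T_0}I(T_0<T_N)\bigr]
\end{align}
as the unique solution of the stated boundary value problem. Here $T_j$ is the hitting time of $j$ by the walk started at $k$.

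For this identification, the boundary values are immediate: $b(0;p)=1$ since $T_0=0$, and $b(N;p)=0$ since $T_N=0<T_0$. For $1\le k\le N-1$, I condition on the first step, which is $k+1$ with probability $r^kp$ and $k-1$ with probability $1-r^kp$. By the Markov property, after the first step the remaining time is an independent copy started from the new state, and the event $\{T_0<T_N\}$ is preserved. This gives exactly
\begin{align}
  b(k;p)=qr^kp\,b(k+1;p)+q(1-r^kp)\,b(k-1;p).
\end{align}
All quantities lie in $[0,1]$ because $0<q<1$, so there are no convergence issues.

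The step needing care is uniqueness, which justifies calling $b$ \emph{the} solution. I would argue it as follows. Suppose two solutions exist and let $d$ be their difference, so that $d(0)=d(N)=0$ and $d$ satisfies the homogeneous equation. Take $k$ with $|d(k)|$ maximal. The equation gives $|d(k)|\le q\max(|d(k+1)|,|d(k-1)|)\le q|d(k)|$, which forces $d\equiv0$ since $q<1$. Equivalently, one can note that the recursion read as a forward recursion from $b(0)=1$ has a one-parameter family of solutions fixed by $b(1)$. The map from $b(1)$ to $b(N)$ is affine with nonzero slope, because the homogeneous solution with $d(0)=0$, $d(1)=1$ is strictly increasing, as $1/(qr^kp)>1$. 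Either argument also gives existence.

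Finally, I take the expectation over $Z_1\sim U(0,1)$, using that $Z_1$ is independent of the future increments except through the conditional law:
\begin{align}
  E\bigl[q^{\theta_1}I(e_1\text{ does not reach }N)\bigr]=\int_0^1\bigl(q(1-p)+pq\,b(1;p)\bigr)\,dp=\frac q2+\int_0^1pq\,b(1;p)\,dp .
\end{align}
Measurability of $p\mapsto b(1;p)$ follows from it being a rational function of $p$, obtained via the finite linear system. I expect no real obstacle beyond stating the first-step decomposition cleanly.
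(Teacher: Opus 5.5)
Your proposal is correct and follows the same route as the paper: under $Z_1=p$ you split on the first step from $0$, identify $b(k;p)$ as the killed hitting-time generating function by first-step analysis, and integrate over $p\in(0,1)$. Your maximum-principle argument proves the uniqueness that the paper only asserts, and your hitting times counted from time $0$ make the boundary value $b(0;p)=1$ literally true.
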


\begin{proof}
Let $b(k;p) = E[q^{\tau_0} I(\tau_0 < \tau_N) \mid X_0 = k, Z_1 = p]$ be the probability generating function of the time to return to $0$ before reaching $N$, starting from state $k$. By conditioning on the first step, $b(k;p)$ naturally satisfies the following linear difference equation:
\begin{align}
    b(k;p) = q r^k p b(k+1;p) + q (1 - r^k p) b(k-1;p)
\end{align}
for $1 \leq k \leq N-1$. Since the process stops and the excursion duration is realized upon returning to $0$, we have $b(0;p) = 1$. On the other hand, if the process reaches $N$ before $0$, the excursion is killed, yielding $b(N;p) = 0$. 

At the beginning of the first excursion from state $0$, conditional on $Z_1 = p$, the process either stays at $0$ for one step with probability $1-p$ (in which case the excursion ends immediately at $t=1$), or moves to $1$ with probability $p$. Taking the expectation with respect to $Z_1 \sim U(0,1)$, we obtain:
\begin{align}
    E[q^{\theta_1} I(e_1 \text{ does not reach } N)] &= E[ (1 - Z_1) q + Z_1 q b(1;Z_1) ] \nonumber \\
    &= \int_0^1 (1 - p) q dp + \int_0^1 p q b(1;p) dp \nonumber \\
    &= \frac{q}{2} + \int_0^1 p q b(1;p) dp.
\end{align}
The sequence $b(k;p)$ can be obtained explicitly by solving the associated tridiagonal linear system, which completes the proof.
\end{proof}

For $0 < q < 1$, the probability generating function of the time to reach $N$ during the first excursion is given as follows:
\begin{theorem}
\label{thm3}

For $0 < q < 1$,
\begin{align}
    E[q^{\tau_N} I(e_1 \text{ reaches } N)] = \int_0^1 p q c(1;p) dp,
\end{align}
where $c(k;p)$ is the unique solution to the following linear boundary value problem:
\begin{align}
    c(k;p) = q r^k p c(k+1;p) + q (1 - r^k p) c(k-1;p), \quad k = 1, 2, \dots, N-1,
\end{align}
with boundary conditions $c(0;p) = 0$ and $c(N;p) = 1$.
\end{theorem}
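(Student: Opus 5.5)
The plan is to mirror the proof of Theorem \ref{thm2}, with the roles of the two boundaries exchanged. First, condition on $Z_1=p$ and define, for $k\in\{0,1,\dots,N\}$,
\begin{align}
c(k;p)=E\left[q^{\tau_N} I(\tau_N<\tau_0)\mid X_0=k,\ Z_1=p\right],
\end{align}
where $\tau_0$ is the first hitting time of $0$. This is the generating function of the time to reach $N$ before returning to $0$.

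For $1\le k\le N-1$, one step is consumed, which contributes a factor $q$. The walk then moves to $k+1$ with probability $r^kp$ or to $k-1$ with probability $1-r^kp$, and the Markov property gives the stated recursion. The boundary values are as follows. From $k=N$ the event has already occurred at time $0$, so $c(N;p)=1$. From $k=0$ the excursion has ended without reaching $N$, so the indicator vanishes and $c(0;p)=0$.

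Uniqueness comes from the fact that, for $0<q<1$, the homogeneous problem (zero boundary data) has only the trivial solution. The cleanest argument is a maximum principle. If $h$ solves the recursion with $h(0)=h(N)=0$, pick $k$ with $|h(k)|$ maximal. The recursion expresses $h(k)$ as $q$ times a convex combination of neighbouring values, so $|h(k)|\le q|h(k)|$, which forces $h\equiv 0$. Equivalently, the tridiagonal matrix $I-qP$ restricted to the interior states is strictly diagonally dominant and hence invertible. The same argument also shows that $c(\cdot;p)$ is the unique bounded solution. This identifies the probabilistic quantity with the solution of the boundary value problem; I regard this step as the main point needing care, although it is easy.

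Next, decompose on the first step of $e_1$ given $Z_1=p$:
\begin{enumerate}[i.]
\item With probability $1-p$ the walk stays at $0$. The excursion then ends at time $1$ without reaching $N$, and the contribution to the expectation is $0$.
\item With probability $p$ the walk moves to $1$ at time $1$. By the Markov property, the remaining contribution is $q\,c(1;p)$.
\end{enumerate}
Hence
\begin{align}
E\left[q^{\tau_N} I(e_1\text{ reaches }N)\mid Z_1=p\right]=p\,q\,c(1;p).
\end{align}
Finally, integrate against the uniform law of $Z_1$. The integrand is measurable in $p$ and bounded by $1$, so Fubini applies and yields the claimed formula.
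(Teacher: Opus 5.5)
Your proposal is correct and follows essentially the same route as the paper. Like the paper, you define $c(k;p)$ as the conditional generating function of $\tau_N$ on $\{\tau_N<\tau_0\}$, get the recursion and boundary values by first-step analysis, then decompose on the first move from $0$ and integrate over $Z_1\sim U(0,1)$; your maximum-principle argument (equivalently, strict diagonal dominance of $I-qP$ on the interior states) additionally justifies the uniqueness claim, which the paper only asserts.
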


\begin{proof}
Let $c(k;p) = E[q^{\tau_N} I(\tau_N < \tau_0) \mid X_0 = k, Z_1 = p]$ be the probability generating function of the time to reach $N$ before returning to $0$, starting from state $k$. By conditioning on the first step, $c(k;p)$ naturally satisfies the following linear difference equation:
\begin{align}
    c(k;p) = q r^k p c(k+1;p) + q (1 - r^k p) c(k-1;p)
\end{align}
for $1 \leq k \leq N-1$. Since the process stops upon reaching $0$ or $N$, the boundary conditions are exactly $c(0;p) = 0$ and $c(N;p) = 1$. The event that $e_1$ reaches $N$ is possible only if the first step is to the right. Taking the expectation with respect to $Z_1 \sim U(0,1)$, we obtain:
\begin{align}
    E[q^{\tau_N} I(e_1 \text{ reaches } N)] = E[ Z_1 q c(1;Z_1) ] = \int_0^1 p q c(1;p) dp.
\end{align}
The sequence $c(k;p)$ can be obtained explicitly by solving the associated tridiagonal linear system.
\end{proof}

Using the explicit characterization of successful and failed excursions from Theorem \ref{thm2} and \ref{thm3}, we can derive the probability generating function of the total time required for the first hitting of $N$.

\begin{corollary}
\label{thm4}
For $0 < q < 1$, the probability generating function of the first hitting time $\tau_N$ is given by:
\begin{align}
    E[q^{\tau_N}] = \frac{ \displaystyle \int_0^1 p q c(1;p) dp }{ \displaystyle 1 - \left( \frac{q}{2} + \int_0^1 p q b(1;p) dp \right) },
\end{align}
where $b(k;p)$ and $c(k;p)$ are the solutions to the linear difference equations defined in Theorem \ref{thm2} and Theorem \ref{thm3}, respectively.
\end{corollary}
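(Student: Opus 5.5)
The plan is to decompose the first hitting time $\tau_N$ along the excursion structure and use the renewal property coming from the independent resampling of $Z_m$. On the event $\{L_{\tau_N}=n\}$, the first $n$ excursions $e_1,\dots,e_n$ return to $0$ without reaching $N$, and the $(n+1)$-th excursion reaches $N$. Hence
\begin{align}
  \tau_N = \sum_{i=1}^{n} (\theta_i - \theta_{i-1}) + (\tau_N - \theta_n).
\end{align}
At each time $\theta_{i-1}$ the process is at $0$ and a fresh $Z_i\sim U(0,1)$ is drawn independently of the past. By the strong Markov property at the stopping times $\theta_{i-1}$, the pairs consisting of the excursion length and the indicator of reaching $N$ are therefore i.i.d. across excursions, each distributed as the corresponding pair for $e_1$.

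I would then write $E[q^{\tau_N}]=\sum_{n\ge0}E[q^{\tau_N}I(L_{\tau_N}=n)]$ and factor each term using this independence:
\begin{align}
  E[q^{\tau_N}I(L_{\tau_N}=n)] = \Big(E[q^{\theta_1}I(e_1\text{ does not reach }N)]\Big)^n\, E[q^{\tau_N}I(e_1\text{ reaches }N)].
\end{align}
To justify this, I would condition successively on $\mathcal F_{\theta_{n}},\mathcal F_{\theta_{n-1}},\dots$. The $(n+1)$-th factor depends only on the post-$\theta_n$ path and on $Z_{n+1}$. The indicator that the first $n$ excursions fail, together with $q^{\theta_n}$, is $\mathcal F_{\theta_n}$-measurable. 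Theorem \ref{thm2} gives the first factor as $\frac q2+\int_0^1 pq\,b(1;p)\,dp$, and Theorem \ref{thm3} gives the second as $\int_0^1 pq\,c(1;p)\,dp$.

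Summing the geometric series requires the ratio $\rho(q)=\frac q2+\int_0^1 pq\,b(1;p)\,dp$ to satisfy $\rho(q)<1$. This holds because $\rho(q)\le q\,P(e_1\text{ does not reach }N)\le q<1$, since $\theta_1\ge1$ and $q^{\theta_1}\le q$. The sum then equals the claimed quotient.

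I would also record that $\tau_N<\infty$ almost surely. This follows from the Corollary on $L_{\tau_N}$, since $\int_0^1 p/S(N;p)\,dp>0$. It ensures that the events $\{L_{\tau_N}=n\}$ exhaust the probability space up to a null set, and we adopt the convention $q^\infty=0$.

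The main obstacle is the rigorous factorization, that is, the precise regeneration at $\theta_i$. The subtlety is that within an excursion the transition kernel depends on $Z_m$, so $X$ alone is not Markov. I would handle this by working with the pair $(X_t,Z_{L_t+1})$, or equivalently by conditioning on the whole i.i.d. sequence $(Z_m)$ and noting that, given it, successive excursions use independent randomness. The remaining steps are routine bookkeeping.
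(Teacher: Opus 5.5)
Your proposal is correct and follows essentially the same route as the paper. You decompose $\tau_N$ on $\{L_{\tau_N}=n\}$ into $n$ failed excursions plus the final successful one, factor the expectation as $\bigl(E[q^{\theta_1}I(e_1 \text{ does not reach } N)]\bigr)^n E[q^{\tau_N}I(e_1 \text{ reaches } N)]$ using the independent resampling of $Z$, and sum the geometric series. The only differences are refinements of what the paper asserts informally: your bound $\rho(q)\le q<1$ (from $\theta_1\ge 1$) replaces the paper's $\rho(q)\le P(e_1 \text{ does not reach } N)<1$, and you justify the regeneration explicitly via the pair $(X_t,Z_{L_t+1})$ or by conditioning on $(Z_m)$.
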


\begin{proof}
The total time $\tau_N$ to reach $N$ can be decomposed into the sum of the durations of the first $L_{\tau_N}$ failed excursions that returned to the origin, and the duration of the final successful excursion that reached $N$. Since the parameter $Z$ is independently resampled from the uniform distribution $U(0,1)$ at the start of each excursion, the durations of these excursions are mutually independent. Then,
\begin{align}
    E[q^{\tau_N}] &= \sum_{n=0}^{\infty} E\left[ q^{\tau_N} I( L_{\tau_N} = n) \right] \\
    &=\sum_{n=0}^{\infty} E\left[\prod_{i=1}^n q^{\theta_i-\theta_{i-1}}I(e_i \text{ does not reach } N)q^{\tau_N-\theta_{n}}I(e_{n+1} \text{ reachs } N)  \right]
    \\
    &= \sum_{n=0}^{\infty} \left( E[q^{\theta_1} I(e_1 \text{ does not reach } N)] \right)^n E[q^{\tau_N} I(e_1 \text{ reaches } N)].
\end{align}
This is a geometric series. Since $E[q^{\theta_1} I(e_1 \text{ does not reach } N)] \le P(e_1 \text{ does not reach } N) < 1$ for $q \in (0,1)$, the series converges to:
\begin{align}
    E[q^{\tau_N}] = \frac{ E[q^{\tau_N} I(e_1 \text{ reaches } N)] }{ 1 - E[q^{\theta_1} I(e_1 \text{ does not reach } N)] }.
\end{align}
Substituting the explicit formulas obtained in Theorem \ref{thm2} and \ref{thm3} into the numerator and denominator completes the proof.
\end{proof}

By evaluating the derivative of the probability generating function as $q \nearrow 1$, we can obtain the expected time to reach $N$.

\begin{corollary}
\label{cor:expected_time}
Let $D_1$ be the duration of the first excursion regardless of whether it terminates at $0$ or $N$, i.e., $D_1=\min\{\theta_1,\tau_N\}$. The expected value of the first hitting time to $N$ is given by:
\begin{align}
    E[\tau_N] = \frac{E[D_1]}{ \displaystyle \int_0^1 \frac{p}{S(N;p)} dp },
\end{align}
where $E[D_1] = \frac{d}{dq} \Big( E[q^{\theta_1} I(e_1 \text{ does not reach } N)] + E[q^{\tau_N} I(e_1 \text{ reaches } N)] \Big) \Big|_{q=1}$, and $S(N;p)$ is the scale function defined in Theorem \ref{thm1}.
\end{corollary}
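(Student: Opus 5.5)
We differentiate the formula of Corollary \ref{thm4} at $q=1$. Write
\begin{align}
  F(q) = E[q^{\theta_1} I(e_1 \text{ does not reach } N)], \qquad G(q) = E[q^{\tau_N} I(e_1 \text{ reaches } N)],
\end{align}
so that $E[q^{\tau_N}] = G(q)/(1-F(q))$ for $0<q<1$.

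\emph{Values at $q=1$.} By monotone convergence as $q \nearrow 1$, we get $F(1) = P(e_1 \text{ does not reach } N)$ and $G(1) = P(e_1 \text{ reaches } N)$. Hence $F(1)+G(1)=1$, and by Theorem \ref{thm1},
\begin{align}
  1-F(1) = G(1) = \int_0^1 \frac{p}{S(N;p)}\,dp =: \rho.
\end{align}
For each fixed $p$ we have $S(N;p) < \infty$, so $\rho > 0$. Moreover $q^{\theta_1} I(e_1 \text{ does not reach } N) + q^{\tau_N} I(e_1 \text{ reaches } N) = q^{D_1}$, so $F(q)+G(q) = E[q^{D_1}]$. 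Abel's theorem (monotone convergence of the derivative series) then gives $F'(1^-)+G'(1^-) = E[D_1]$, which is exactly the expression for $E[D_1]$ in the statement.

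\emph{Differentiation.} By the quotient rule,
\begin{align}
  \frac{d}{dq}\frac{G}{1-F} = \frac{G'(1-F) + G F'}{(1-F)^2}.
\end{align}
At $q=1$, substituting $1-F(1) = G(1) = \rho$ gives
\begin{align}
  \frac{\rho\, G'(1) + \rho\, F'(1)}{\rho^2} = \frac{F'(1)+G'(1)}{\rho} = \frac{E[D_1]}{\rho}.
\end{align}
Since $E[q^{\tau_N}]$ is a power series in $q$ with nonnegative coefficients, Abel's theorem also gives that its left derivative at $1$ equals $E[\tau_N]$ (finite or infinite). This yields the claimed formula.

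\emph{Main obstacle: finiteness of $E[D_1]$.} The step above needs $E[D_1] < \infty$; otherwise both sides are $+\infty$ and the identity holds trivially in the extended sense. Conditional on $Z_1 = p$ with $p>0$, the walk from any $k \in \{1,\dots,N-1\}$ exits $\{1,\dots,N-1\}$ within $N$ steps with probability at least $\delta(p) > 0$, since moving left $k$ times already suffices. So $D_1$ has a geometric tail given $p$. Integrating over $p$ requires some care as $p \to 0$, but there $D_1 = 1$ with probability $1-p$. The excursion starts only with probability $p$, and from $1$ the leftward probabilities $1 - r^k p$ are close to $1$, so the conditional expected duration stays bounded uniformly for small $p$. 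Near $p=1$ with $r=1$, the walk is a finite-interval walk pushed toward $N$, which also has bounded exit time. Thus $E[D_1 \mid Z_1 = p]$ is bounded uniformly in $p$, and $E[D_1] < \infty$.

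Alternatively, Wald's identity applied to the excursion decomposition gives $E[\tau_N] = E[\text{number of excursions}]\cdot E[D_1] = E[D_1]/\rho$ directly. Here the number of excursions is $L_{\tau_N}+1$, which is geometric with mean $1/\rho$ by the corollary on $L_{\tau_N}$. This route can serve as a cross-check that avoids the derivative computation.
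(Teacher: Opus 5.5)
Your proposal is correct and follows the paper's proof: apply the quotient rule to $E[q^{\tau_N}] = G(q)/(1-F(q))$ from Corollary~\ref{thm4}, and use $1-F(1)=G(1)=\int_0^1 p/S(N;p)\,dp$ to reduce the derivative at $q=1$ to $(F'(1)+G'(1))/G(1)=E[D_1]/G(1)$. Your Abel-theorem and finiteness arguments, and the Wald cross-check, are valid additions that the paper leaves implicit; one small fix is that for $p=r=1$ the positive exit probability $\delta(p)$ must come from the all-right path, since the all-left path then has probability zero.
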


\begin{proof}
Let $A(q) = E[q^{\theta_1} I(e_1 \text{ does not reach } N)]$ and $B(q) = E[q^{\tau_N} I(e_1 \text{ reaches } N)]$. From Theorem \ref{thm4}, we have $E[q^{\tau_N}] = \frac{B(q)}{1 - A(q)}$. Differentiating with respect to $q$ yields:
\begin{align}
    \frac{d}{dq} E[q^{\tau_N}] = \frac{B'(q)(1 - A(q)) + B(q)A'(q)}{(1 - A(q))^2}.
\end{align}
Taking the limit as $q \nearrow 1$, we have $A(1) = P(e_1 \text{ does not reach } N)$ and $B(1) = P(e_1 \text{ reaches } N)$. Therefore, by Theorem \ref{thm1}, $1 - A(1) = B(1) = \int_0^1 \frac{p}{S(N;p)} dp$. Substituting these into the derivative expression gives:
\begin{align}
    E[\tau_N] &= \frac{B'(1)B(1) + B(1)A'(1)}{(B(1))^2} \nonumber \\
    &= \frac{A'(1) + B'(1)}{B(1)}.
\end{align}
Since $A'(1) + B'(1) = E[\theta_1 I(e_1 \text{ does not reach } N)] + E[\tau_N I(e_1 \text{ reaches } N)] = E[D_1]$, the desired result is obtained.
\end{proof}

\subsection{Occupation Time}

Given $Z_1 = p$, we investigate the expected number of visits to state $k \in \{1, 2, \dots, N-1\}$ during the first excursion.

\begin{theorem}\label{thm:greens_function}
For $k \in \{1, 2, \dots, N-1\}$, let
$$G(k;p) = E\left[\left. \sum_{t=0}^{\tau_0 \wedge \tau_N - 1} I(X_t = k) \right| X_0 = 1, Z_1 = p \right].$$
Then $G(k;p)$ is given by:
$$    G(k;p) = \frac{S(N;p) - S(k;p)}{W(k;p) S(N;p)},
$$
where $S(k;p)$ is the scale function defined in Theorem \ref{thm1}, and $W(k;p)$ is defined as follows:
$$    W(k;p) = r^k p \prod_{j=1}^k \frac{1 - r^j p}{r^j p}.$$
\end{theorem}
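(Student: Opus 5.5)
The plan is to use the standard decomposition of a Green's function for a birth--death chain, conditional on $Z_1=p$: a first-passage probability times the expected number of visits to $k$ started from $k$. Write $\sigma = \tau_0\wedge\tau_N$. By the strong Markov property at the hitting time $\tau_k$,
\[
G(k;p) = P(\tau_k<\sigma \mid X_0=1,Z_1=p)\cdot E\Big[\sum_{t=0}^{\sigma-1} I(X_t=k)\,\Big|\, X_0=k, Z_1=p\Big].
\]
For $k=1$ the first factor is $1$, which is consistent with the sum starting at $t=0$. Started from $k$, the number of visits to $k$ before $\sigma$ is geometric with success parameter $\varepsilon(k;p)$, the probability of hitting $\{0,N\}$ before returning to $k$. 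Hence the second factor is $1/\varepsilon(k;p)$.

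\textbf{First factor.} I would reuse the proof of Theorem~\ref{thm1}, with $N$ replaced by the boundary $k$. The difference equation and the scale function do not depend on the upper boundary, so the probability of reaching $k$ before $0$ from state $1$ is $1-a(1;p)=S(1;p)/S(k;p)=1/S(k;p)$.

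\textbf{Escape probability.} Set $\rho_i=\prod_{j=1}^{i}\frac{1-r^jp}{r^jp}$, so that $S(i+1;p)-S(i;p)=\rho_i$. Condition on the first step from $k$.
\begin{itemize}
\item With probability $r^kp$ the walk moves to $k+1$. It must then reach $N$ before $k$, which by the same scale-function argument on $\{k,\dots,N\}$ has probability $\rho_k/(S(N;p)-S(k;p))$.
\item With probability $1-r^kp$ the walk moves to $k-1$. It must then reach $0$ before $k$, which has probability $\rho_{k-1}/S(k;p)$.
\end{itemize}

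\textbf{Key identity.} The step that makes everything collapse is
\[
(1-r^kp)\rho_{k-1}=r^kp\,\rho_k=W(k;p),
\]
which follows directly from the definition of $\rho_k$. With it,
\[
\varepsilon(k;p)=W(k;p)\Big(\frac{1}{S(N;p)-S(k;p)}+\frac{1}{S(k;p)}\Big)=\frac{W(k;p)\,S(N;p)}{S(k;p)\,(S(N;p)-S(k;p))}.
\]
Multiplying $1/S(k;p)$ by $1/\varepsilon(k;p)$ gives exactly $(S(N;p)-S(k;p))/(W(k;p)S(N;p))$.

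\textbf{Main obstacle.} The only delicate point is the bookkeeping in the hitting probabilities on subintervals. The scale function on $\{k,\dots,N\}$ must be the translate whose increments are still $\rho_i$, so that the probability from $k+1$ is $(S(k+1)-S(k))/(S(N)-S(k))$. I also need the convention $\rho_0=1$ so that the case $k=1$ works on the left side. These checks are routine once the harmonic function $S(\cdot;p)$ of the difference equation in Theorem~\ref{thm1} is identified.

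As a fallback, one could verify directly that the claimed $G$ satisfies the adjoint equation
\[
G(k)=I(k=1)+r^{k-1}p\,G(k-1)+(1-r^{k+1}p)\,G(k+1),
\]
with $G(0)=G(N)=0$, and then invoke uniqueness.
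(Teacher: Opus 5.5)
Your proof is correct. It reaches the formula by a probabilistic route, whereas the paper's route is analytic.

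\textbf{The paper's route.} The paper treats $G(x,k;p)$ as a function of the starting point $x$ and writes the inhomogeneous backward equation $G(x,k;p)=r^xp\,G(x+1,k;p)+(1-r^xp)\,G(x-1,k;p)+\delta_{x,k}$, with $G(0,k;p)=G(N,k;p)=0$. It solves the homogeneous recurrence on each side of $k$. This gives $G(x,k;p)=G(k,k;p)S(x;p)/S(k;p)$ for $x\le k$ and $G(x,k;p)=G(k,k;p)\frac{S(N;p)-S(x;p)}{S(N;p)-S(k;p)}$ for $x\ge k$. It then fixes $G(k,k;p)$ from the equation at $x=k$.

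\textbf{Your route.} You get the factorization $G(1,k;p)=P_1(\tau_k<\sigma)\,G(k,k;p)$ directly from the strong Markov property. You get $G(k,k;p)=1/\varepsilon(k;p)$ from the geometric law of returns to $k$.

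\textbf{How they match.} The two computations coincide term by term. The paper's piecewise profile in $x$ is exactly $P_x(\tau_k<\sigma)$, and at $x=1$ it gives your factor $1/S(k;p)$. The bracket that multiplies $G(k,k;p)$ in the paper is
\[
1-r^kp\,\frac{S(N;p)-S(k+1;p)}{S(N;p)-S(k;p)}-(1-r^kp)\,\frac{S(k-1;p)}{S(k;p)}.
\]
This is precisely your escape probability $\varepsilon(k;p)$, written as one minus the return probability. Both proofs simplify it with the same identity $(1-r^kp)\rho_{k-1}=r^kp\,\rho_k=W(k;p)$.

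\textbf{What each approach buys.} Your version explains why the solution splits into two harmonic pieces and why that normalizing bracket appears. It also reuses the hitting probability from Theorem~\ref{thm1} verbatim, and it makes visible that $1/S(k;p)$ is the same quantity that governs $P(M\ge k)$ in the maximum-penetration theorem. The paper's version is more mechanical and produces $G(x,k;p)$ for every starting state $x$ at once, which is convenient for other initial conditions. Your fallback is also a valid independent route: checking the forward equation in the target variable $k$, with $G(0)=G(N)=0$, then invoking uniqueness for the finite killed chain.
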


\begin{proof}
Let $G(x, k; p)$ be the expected number of visits to state $k$ starting from state $x \in \{0, 1, \dots, N\}$ before reaching state $0$ or $N$. By definition, $G(k;p) = G(1, k; p)$. Using the Markov property, $G(x, k; p)$ satisfies the following linear boundary value problem:
$$    G(x, k; p) = r^x p G(x+1, k; p) + (1 - r^x p) G(x-1, k; p) + \delta_{x,k},$$
where the boundary conditions are $G(0, k; p) = 0$ and $G(N, k; p) = 0$, and $\delta_{x,k}$ is the Kronecker delta. For $x <k$, this equation is identical to the homogeneous difference equation solved in Theorem \ref{thm1}, and similarly satisfies
$$G(x+1,k;p)-G(x,k;p)=\frac{1-r^xp}{r^xp}(G(x,k;p)-G(x-1,k;p)).$$
Therefore, for $x < k$, the solution is
\begin{align}
  G(x,k;p)-G(0,k;p)&=G(1,k;p) \sum_{i=1}^{x} \prod_{j=1}^{i-1} \frac{1-r^jp}{r^jp}
  \\
  G(x,k;p)&=G(1,k;p)S(x;p)
\end{align}
and thus is proportional to $S(x;p)$.
 Also, for $x > k$,
 \begin{align}
   G(i,k;p)-G(i-1,k;p)&=(G(k+1,k;p)-G(k,k;p))\prod_{j=k+1}^{i-1} \frac{1-r^jp}{r^jp}
   \\
   G(N,k;p)-G(x,k;p)&=(G(k+1,k;p)-G(k,k;p))\sum_{i=x+1}^{N}\prod_{j=k+1}^{i-1} \frac{1-r^jp}{r^jp}
   \\
   G(x,k;p)&=-(G(k+1,k;p)-G(k,k;p))(S(N;p)-S(x;p))
 \end{align}
 it must be proportional to $S(N;p) - S(x;p)$.
To ensure continuity at $x=k$, the solution can be constructed as follows:
$$    G(x, k; p) = 
    \begin{cases}
        G(k, k; p) \dfrac{S(x;p)}{S(k;p)} & (0 \le x \le k) \\[8pt]
        G(k, k; p) \dfrac{S(N;p) - S(x;p)}{S(N;p) - S(k;p)} & (k \le x \le N)
    \end{cases}$$
To find $G(k, k; p)$, we apply the difference equation at $x=k$:
$$    G(k, k; p) = r^k p G(k+1, k; p) + (1 - r^k p) G(k-1, k; p) + 1.$$
Substituting the proportional forms of $G(k+1, k; p)$ and $G(k-1, k; p)$ gives:
\begin{gather}
  G(k, k; p) \left[ 1 - r^k p \frac{S(N;p) - S(k+1;p)}{S(N;p) - S(k;p)} - (1 - r^k p) \frac{S(k-1;p)}{S(k;p)} \right] = 1
  \\
  G(k, k; p) \left[  r^k p \frac{S(k+1;p) - S(k;p)}{S(N;p) - S(k;p)} + (1 - r^k p) \frac{S(k;p)-S(k-1;p)}{S(k;p)} \right] = 1
\end{gather}
Using the property of the scale function $S(m+1;p) - S(m;p) = \prod_{j=1}^m \frac{1 - r^j p}{r^j p}$, we find that both $r^k p (S(k+1;p) - S(k;p))$ and $(1 - r^k p) (S(k;p) - S(k-1;p))$ are exactly equal to $W(k;p)$. Rearranging the terms inside the brackets leads to:
$$    G(k, k; p) \left[ W(k;p) \left( \frac{1}{S(N;p) - S(k;p)} + \frac{1}{S(k;p)} \right) \right] = 1,$$
which yields $G(k, k; p) = \dfrac{S(k;p) (S(N;p) - S(k;p))}{W(k;p) S(N;p)}$. Finally, evaluating $G(x, k; p)$ at the starting state $x=1$, we obtain:
$$    G(1, k; p) = G(k, k; p) \frac{S(1;p)}{S(k;p)} = \frac{S(N;p) - S(k;p)}{W(k;p) S(N;p)}$$
since $S(1;p) = 1$. This completes the proof.
\end{proof}

Using the expected number of visits during an excursion, the total expected duration  of the first excursion $E[D_1]$ where $D_1=\min\{\theta_1,\tau_N\}$ can be expressed.

\begin{corollary}
The expected duration $E[D_1]$ of the first excursion satisfies the following relationship:
$$    E[D_1] = 1 + \int_0^1 p \sum_{k=1}^{N-1} G(k;p) dp,
$$
where $G(k;p)$ is the Green's function derived in Theorem \ref{thm:greens_function}.
\end{corollary}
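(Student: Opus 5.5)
We decompose the first excursion at its first step and then count the remaining time as a sum of occupation times, which Theorem \ref{thm:greens_function} already evaluates. Throughout we condition on $Z_1=p$ and average over $p\in(0,1)$ at the end. Since $Z_1$ is resampled independently per excursion, the conditional law of $(X_t)$ up to $D_1$ given $Z_1=p$ is exactly the chain started at $0$ with parameter $p$.

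The first step from $0$ always takes one time unit. With probability $1-p$ the walk stays at $0$. In that case $\theta_1=1$ (recall $\theta_1=\inf\{u>0: X_u=0\}$) and $N>0$ has not been reached, so $D_1=1$. With probability $p$ the walk moves to $1$. By the Markov property at time $1$, the remaining duration $D_1-1$ then has the law of $\tau_0\wedge\tau_N$ for the chain started at $X_0=1$. Hence
\begin{align}
E[D_1\mid Z_1=p] = 1 + p\, E[\tau_0\wedge\tau_N \mid X_0=1, Z_1=p].
\end{align}

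Next we write the hitting time as a sum of indicators. Started from $1$, for $0\le t\le \tau_0\wedge\tau_N-1$ the walk lies in $\{1,\dots,N-1\}$. Therefore
\begin{align}
\tau_0\wedge\tau_N=\sum_{t=0}^{\tau_0\wedge\tau_N-1}\sum_{k=1}^{N-1} I(X_t=k).
\end{align}
Taking conditional expectations and exchanging the finite sum over $k$ with the expectation (Tonelli, all terms nonnegative) gives $E[\tau_0\wedge\tau_N\mid X_0=1,Z_1=p]=\sum_{k=1}^{N-1}G(k;p)$, with $G$ as defined in Theorem \ref{thm:greens_function}. Integrating against the uniform density of $Z_1$ then yields
\begin{align}
E[D_1]=\int_0^1\Bigl(1+p\sum_{k=1}^{N-1}G(k;p)\Bigr)dp = 1+\int_0^1 p\sum_{k=1}^{N-1}G(k;p)\,dp.
\end{align}

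The only delicate point is finiteness and integrability, so that the expectations and the exchange are legitimate. For fixed $p\in(0,1)$ the killed chain lives on the finite set $\{1,\dots,N-1\}$. From every state there is a positive-probability path to $0$ (all left-step probabilities $1-r^kp$ are positive), so $\tau_0\wedge\tau_N$ has geometric tails and each $G(k;p)$ is finite, matching the explicit formula.

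The integrand $p\sum_k G(k;p)$ is nonnegative, so the outer integral is well defined by Tonelli even if it is $+\infty$. The identity then holds in $[0,\infty]$. Possible blow-up of $1/W(k;p)$ as $p\to0$ is harmless because the identity is an equality of nonnegative quantities. One could further check finiteness near $p=0$ from the explicit form of $G$, but the statement does not require it.
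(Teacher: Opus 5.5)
Your proof is correct and follows the paper's argument: you condition on $Z_1=p$, split on the first step from $0$, identify $E[\tau_0\wedge\tau_N\mid X_0=1,Z_1=p]$ with $\sum_{k=1}^{N-1}G(k;p)$, and integrate over $p$; your occupation-time identity and finiteness remarks only make explicit what the paper asserts in one line. A small aside: $1/W(k;p)$ does not blow up as $p\to0$ (it tends to $0$ for $k\ge2$ and to $1$ for $k=1$), and $E[\tau_0\wedge\tau_N\mid X_0=1,Z_1=p]$ is in fact bounded uniformly in $p$, so $E[D_1]<\infty$.
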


\begin{proof}
Conditional on $Z_1=p$, if the first step of the process does not move (probability $1-p$), it immediately reaches $0$, and the duration is $1$. If the first step is to the right (probability $p$), the expected time spent in the interior states before reaching $0$ or $N$ is $\sum_{k=1}^{N-1} G(k;p)$. Adding the first step, the conditional expected duration is
$$    E[D_1 \mid Z_1 = p] = (1-p) \cdot 1 + p \left( 1 + \sum_{k=1}^{N-1} G(k;p) \right) = 1 + p \sum_{k=1}^{N-1} G(k;p).$$
Since $Z_1$ follows a uniform distribution, integrating over $p \in (0,1)$ yields the unconditional expected value $E[D_1]$.
\end{proof}

\subsection{Maximum Penetration Depth}

To further understand the behavior of a single excursion, we investigate its maximum penetration depth. Let $\displaystyle M = \max_{0 \le t \le \theta_1} X_t$ be the maximum state reached during the first excursion $e_1$ before returning to the origin.

\begin{theorem}
\label{thm:max_penetration}
The tail distribution of the maximum penetration depth $\displaystyle M = \max_{0 \le t \le \theta_1} X_t$ is given by:
\begin{align}
    P(M \ge k) = \int_0^1 \frac{p}{S(k;p)} dp, \quad (k = 1, 2, \dots)
\end{align}
where $S(k;p)$ is the scale function from Theorem \ref{thm1}.
\end{theorem}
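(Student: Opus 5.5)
The plan is to reduce the event $\{M \ge k\}$ to the event that the first excursion reaches level $k$, and then apply Theorem \ref{thm1} with $N$ replaced by $k$. The key observation is that the walk moves by nearest-neighbour steps, with jumps of size $\pm 1$ or staying at $0$. Hence $\{M\ge k\}$ coincides with $\{\tau_k<\theta_1\}$, the event that the excursion $e_1$ hits state $k$ before returning to $0$. Note also that, conditional on $Z_1=p$, the law of the path up to $\tau_k\wedge\theta_1$ is the same for the original process and for the process killed at level $k$. The killing only acts after $\tau_k$, so it does not change whether level $k$ is reached.

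For $k\ge 2$, I then invoke Theorem \ref{thm1} with $N=k$. That theorem gives $P(e_1 \text{ reaches } k)=1-P(e_1\text{ does not reach }k)=\int_0^1 \frac{p}{S(k;p)}\,dp$. Written out, this is conditioning on $Z_1=p$: the first step goes to $1$ with probability $p$, and from $1$ the walk reaches $k$ before $0$ with probability $1-a(1;p)=1/S(k;p)$, where $a$ is as in the proof of Theorem \ref{thm1}. Integrating over $Z_1\sim U(0,1)$ gives the formula.

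The case $k=1$ must be checked separately, since Theorem \ref{thm1} was derived with the interior $\{1,\dots,N-1\}$ nonempty. Here $M\ge1$ exactly when the first step moves right, so $P(M\ge 1)=\int_0^1 p\,dp$. This matches the claim because $S(1;p)=1$.

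The main point requiring care, though not difficult, is the finiteness of $\theta_1$. We need $M$ to be well defined and $\{M\ge k\}=\{\tau_k<\theta_1\}$ to hold with no contribution from paths that never return. I would note that for $p<1$, from any state $j\ge1$ the walk moves left with probability $1-r^jp\ge 1-p>0$, so the walk restricted to $\{0,\dots,k\}$ exits in finite time almost surely. This shows $\tau_k\wedge\theta_1<\infty$ a.s., which is all the identity needs. The set $p=1$ has Lebesgue measure zero and does not affect the integral.
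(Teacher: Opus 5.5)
Your proposal is correct and follows the paper's own route: identify $\{M\ge k\}$ with the event that the first excursion hits $k$ before returning to $0$, then apply Theorem~\ref{thm1} with $N=k$. Your separate check of $k=1$ (using $S(1;p)=1$) and your argument that $\tau_k\wedge\theta_1<\infty$ a.s.\ are careful additions that the paper leaves implicit; they are harmless and make the complement step slightly more rigorous.
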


\begin{proof}
The event $\{M \ge k\}$ means that the random walk successfully reaches state $k$ during the first excursion before returning to $0$. This is equivalent to the event that $e_1$ hits the boundary when the boundary set is placed at $k$. From the non-hitting probability derived in Theorem \ref{thm1} (with $N$ replaced by $k$), we find:
\begin{align}
    P(M \ge k) = 1 - P(e_1 \text{ does not reach } k) = \int_0^1 \frac{p}{S(k;p)} dp,
\end{align}
so that we obtain the assertion. 
\end{proof}

Since $M$ is a non-negative integer-valued random variable, its expected value can be calculated using the tail sum formula:
\begin{align}
    E[M] = \sum_{k=1}^{\infty} P(M \ge k) = \sum_{k=1}^{\infty} \int_0^1 \frac{p}{S(k;p)} dp.
\end{align}
Since the integrand is non-negative, the order of summation and integration can be interchanged by the monotone convergence theorem (or Fubini-Tonelli theorem).

 \bibliography{5ref} 
 \bibliographystyle{apalike}

\end{document}